\newtheorem{theorem}{Theorem}[section]
\newtheorem{lemma}[theorem]{Lemma}
\newtheorem{definition}[theorem]{Definition}
\newtheorem{claim}[theorem]{Claim}
\newcommand{\clubsucc}[2]{\mathrm{succ}_{#1}(#2)}
\newcommand{\seq}[2]{\left\langle #1\;|\; #2\right\rangle}
\DeclareMathOperator{\Ad}{Ad}
\DeclareMathOperator{\cf}{cf}
\DeclareMathOperator{\ran}{ran}
\begin{document}

\title{
Some Calkin algebras have outer automorphisms
}
\thanks{This work was initiated at the Mittag-Leffler Institute during the authors' visit in September, 2009.  The first author was partially supported by NSERC}

\author{Ilijas Farah}
\address{Ilijas Farah:
Department of Mathematics and Statistics, York University,
4700 Keele Street, North York, Ontario, Canada, M3J 1P3; and Matematicki
Institut, Kneza Mihaila 34, Belgrade, Serbia.}

\email{ifarah@mathstat.yorku.ca}           
\author{Paul McKenney}

\address{Paul McKenney: Department of Mathematical Sciences, 
Carnegie Mellon University, Pittsburgh, USA}
\email{pmckenne@andrew.cmu.edu}

\author{Ernest Schimmerling}

\address{Ernest Schimmerling: Department of Mathematical Sciences, 
Carnegie Mellon University, Pittsburgh, USA}
\email{eschimme@andrew.cmu.edu}

\maketitle

\begin{abstract}
We consider various quotients of the C*-algebra of bounded operators on a 
nonseparable Hilbert space, and prove in some cases that, assuming some restriction
of the Generalized Continuum Hypothesis, there are many outer automorphisms.
\end{abstract}

\section{Introduction}
\label{intro}
  Let $\mathcal{H}$ be a Hilbert space.  The \emph{Calkin algebra} over $\mathcal{H}$ is the quotient $\mathcal{C}(\mathcal{H}) = \mathcal{B}(\mathcal{H})/\mathcal{K}$, where $\mathcal{B}(\mathcal{H})$ is the C*-algebra of bounded, linear operators on $\mathcal{H}$, and $\mathcal{K}$ is its ideal of compact operators.  Assuming the Continuum Hypothesis, Phillips and Weaver constructed $2^{2^{\aleph_0}}$-many automorphisms of the Calkin algebra on the Hilbert space of dimension $\aleph_0$ (\cite{PW}).  Since there are only $2^{\aleph_0}$-many automorphisms of $\mathcal{C}(\mathcal{H})$ which are \emph{inner} (that is, implemented by conjugation by a unitary), this implies in particular that there are many more outer automorphisms than there are inner ones, in the presence of CH.  
  
  The first author proved in~\cite{F.C} that it is relatively consistent with ZFC that all automorphisms of the Calkin algebra on a separable Hilbert space are inner.  This establishes the existence of an outer automorphism as a question independent of ZFC.  The assumption made there was \emph{Todor\v cevi\'c's Axiom} (TA), a combinatorial principle also known as the \emph{Open Coloring Axiom}.  TA has a number of consequences in other areas of mathematics, and follows from the \emph{Proper Forcing Axiom} (PFA), which is itself well-known for its influence on certain kinds of rigidity in mathematics (see~\cite{M.PFA}).  The first author extended this result to prove that all automorphisms of the Calkin algebra over \emph{any} Hilbert space, separable or not, are inner, assuming PFA (\cite{F.AC}).

  The development of these results parallels those in the study of the automorphisms of the Boolean algebra $\mathcal{P}(\omega)/\mathrm{fin}$.  Rudin (\cite{R}) discovered early on that, assuming CH, there are many automorphisms of $\mathcal{P}(\omega)/\mathrm{fin}$ that are not \emph{trivial}, i.e. induced by functions $e : \omega\to\omega$; Shelah (\cite{Sh.PF}) much later proved the consistency of the opposite result, that all automorphisms are trivial.  Shelah and Stepr\-ans then showed that all automorphisms are trivial assuming PFA (\cite{ShStep}), and then Veli\v ckovi\'c showed using PFA that all automorphisms of $\mathcal{P}(\kappa)/\mathrm{fin}$ are trivial, for every infinite cardinal $\kappa$ (along with reducing the assumption to $\mathrm{TA} + \mathrm{MA}_{\aleph_1}$ in the original case $\kappa = \omega$).

  One might ask for the consistency of outer automorphisms of $\mathcal{C}(\mathcal{H})$ when $\mathcal{H}$ is nonseparable, or nontrivial automorphisms of $\mathcal{P}(\kappa)/\mathrm{fin}$ when $\kappa$ is uncountable.  The latter result is easy, though for trivial reasons, since the automorphisms of $\mathcal{P}(\omega)/\mathrm{fin}$ can all be extended to automorphisms of $\mathcal{P}(\kappa)/\mathrm{fin}$, and any extension of a nontrivial automorphism of $\mathcal{P}(\omega)/\mathrm{fin}$ must also be nontrivial.  In the case of $\mathcal{C}(\mathcal{H})$ this is not so clear, and in fact it is not yet known whether the existence of an outer automorphism of $\mathcal{C}(\mathcal{H})$, when $\mathcal{H}$ is nonseparable, is consistent with ZFC.  However in the case where $\mathcal{H}$ is nonseparable there is more than one quotient of $\mathcal{B}(\mathcal{H})$ to consider.  In this note we study some of these different quotients, and offer some alternatives results;

  \begin{theorem}
    \label{large.intro}
    Let $\mathcal{H}$ be a Hilbert space of some regular, uncountable dimension $\kappa$ and let $\mathcal{J}$ be the ideal in $\mathcal{B}(\mathcal{H})$ of operators whose range has dimension less than $\kappa$.  If $2^\kappa = \kappa^+$, then the quotient $\mathcal{B}(\mathcal{H})/\mathcal{J}$ has $2^{\kappa^+}$-many outer automorphisms.
  \end{theorem}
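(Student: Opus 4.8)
The plan is to adapt the argument of Phillips and Weaver (\cite{PW}), who produced $2^{2^{\aleph_0}}$ automorphisms of the separable Calkin algebra under CH, with the hypothesis $2^\kappa=\kappa^+$ playing the role of CH and $\kappa^+$ the role of $\omega_1$. First, some bookkeeping. Since $\kappa$ is regular and uncountable, $\mathcal{J}$ is norm-closed: if $T_n\to T$ with $\dim\overline{\ran T_n}<\kappa$ for all $n$, then the projection $e$ onto $\overline{\bigvee_n\ran T_n}$ has rank $<\kappa$ and $e^\perp T=\lim_n e^\perp T_n=0$, so $\dim\overline{\ran T}<\kappa$; thus $\mathcal{A}:=\mathcal{B}(\mathcal{H})/\mathcal{J}$ is a unital C*-algebra. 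An operator is determined by its matrix with respect to a fixed orthonormal basis, so $|\mathcal{B}(\mathcal{H})|\le 2^\kappa=\kappa^+$, hence $|\mathcal{A}|\le\kappa^+$; consequently $\mathcal{A}$ has at most $\kappa^+$ unitaries and therefore at most $\kappa^+$ inner automorphisms, while $|\operatorname{Aut}(\mathcal{A})|\le(\kappa^+)^{\kappa^+}=2^{\kappa^+}$. It therefore suffices to construct $2^{\kappa^+}$ pairwise distinct automorphisms of $\mathcal{A}$: all but $\le\kappa^+<2^{\kappa^+}$ of them are outer, and $2^{\kappa^+}$ is in any case the largest possible number.

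For the construction, fix a decomposition $\mathcal{H}=\bigoplus_{\alpha<\kappa}\mathcal{H}_\alpha$ with $\dim\mathcal{H}_\alpha=\aleph_0$ and let $q_\xi$ be the projection onto $\bigoplus_{\alpha<\xi}\mathcal{H}_\alpha$, so that $\seq{q_\xi}{\xi\le\kappa}$ is a continuous increasing chain of projections with $q_\kappa=1$ and $1-q_\xi\notin\mathcal{J}$ for $\xi<\kappa$. Using regularity of $\kappa$ one checks that the block-diagonal subalgebra $\mathcal{D}:=\prod_{\alpha<\kappa}\mathcal{B}(\mathcal{H}_\alpha)$ satisfies $\mathcal{D}\cap\mathcal{J}=\{(T_\alpha)_\alpha\in\mathcal{D}:|\{\alpha:T_\alpha\ne 0\}|<\kappa\}$, so $\mathcal{A}$ contains a copy of the $<\kappa$-reduced product $\prod_{\kappa}\mathcal{B}(\ell^2)\big/\bigoplus^{<\kappa}_{\kappa}\mathcal{B}(\ell^2)$; as in \cite{PW}, this reduced-product-like part will supply the flexibility that drives the construction. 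Using $2^\kappa=\kappa^+$, fix an enumeration $\seq{T_\xi}{\xi<\kappa^+}$ of a norm-dense $\ast$-subalgebra of $\mathcal{B}(\mathcal{H})$ of cardinality $\kappa^+$, closed under the operations used below, in which every operator occurs cofinally.

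Given $s\in 2^{\kappa^+}$ I would build an automorphism $\Phi_s$ of $\mathcal{A}$ by recursion on $\xi<\kappa^+$, maintaining increasing continuous chains of C*-subalgebras $\mathcal{B}_\xi,\mathcal{C}_\xi\subseteq\mathcal{A}$, each of size $\le\kappa$ with $\bar T_\eta\in\mathcal{B}_\xi\cap\mathcal{C}_\xi$ for $\eta<\xi$, together with unitaries $w_\xi\in\mathcal{B}(\mathcal{H})$ such that $\phi_\xi:=\Ad(w_\xi)$ induces a $\ast$-isomorphism $\mathcal{B}_\xi\to\mathcal{C}_\xi$ extending each $\phi_\eta$ for $\eta<\xi$ (equivalently, $w_\xi^\ast w_\eta$ commutes with $\mathcal{B}_\eta$ modulo $\mathcal{J}$ for $\eta<\xi$). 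At a successor step I would apply an \emph{extension lemma}: a partial isomorphism of this kind extends so as to put $\bar T_\xi$ both in its domain (the ``forth'' requirement) and in its range (``back''), which guarantees that $\Phi_s:=\bigcup_\xi\phi_\xi$, extended by continuity from the dense subalgebra $\bigcup_\xi\mathcal{B}_\xi$, is a surjective $\ast$-endomorphism, hence an automorphism. At a limit $\delta$ of cofinality $<\kappa$ one simply takes unions, which causes no difficulty since $\kappa$ is regular. The decisive case is a limit $\delta$ of cofinality $\kappa$: here the $w_\xi$ ($\xi<\delta$) need not converge, and coherence pins down the next unitary only up to a correction supported on a rank-$<\kappa$ corner of the block-diagonal part; there are at least two incompatible such corrections, and I would let $s(\delta)$ choose between them, arranging moreover that the choice is witnessed on some class $\bar T_\eta$ with $\delta<\eta<\delta+\omega$. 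Then $s\ne t$, with least difference at $\delta$, forces $\Phi_s(\bar T_\eta)\ne\Phi_t(\bar T_\eta)$, so the $2^{\kappa^+}$ sequences $s$ yield $2^{\kappa^+}$ distinct automorphisms, which completes the proof by the first paragraph.

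The main obstacle is the C*-algebraic heart of the recursion: the extension lemma at successor steps and the verification of genuine two-fold freedom at limit steps of cofinality $\kappa$. This is precisely the hard part of \cite{PW}, now to be carried out with $\mathcal{J}$ in place of the compact operators; one uses that the corners $q_\nu\mathcal{A}q_\nu$ are again quotients of the same form, that $\mathcal{J}$ is closed under suprema of $<\kappa$-many rank-$<\kappa$ projections (so that ``small'' partial data really is negligible and can be freely corrected), and appropriate saturation of $\mathcal{A}$ to run the back-and-forth. Beyond $2^\kappa=\kappa^+$, the only set-theoretic input is that $\{\delta<\kappa^+:\cf\delta=\kappa\}$ has size $\kappa^+$, which turns the local binary choice into $2^{\kappa^+}$ global outcomes; regularity of $\kappa$ is used throughout to keep the small objects small under the unions taken along the recursion.
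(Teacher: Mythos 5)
Your opening reduction is correct and is worth making explicit: since an operator is determined by its matrix, $|\mathcal{B}(\mathcal{H})|\le 2^\kappa=\kappa^+$, so there are at most $\kappa^+$ inner automorphisms of $\mathcal{B}(\mathcal{H})/\mathcal{J}$ and it suffices to produce $2^{\kappa^+}$ pairwise distinct automorphisms. The construction you then offer, however, is a plan rather than a proof: every step carrying mathematical content is deferred. The bifurcation at limits of cofinality $\kappa$ --- the existence of ``at least two incompatible corrections,'' each of which can be continued for another $\kappa^+$ stages --- is exactly the hard part of \cite{PW}, and you state rather than prove that it transfers from $(\mathcal{B}(\ell^2),\mathcal{K})$ to $(\mathcal{B}(\mathcal{H}),\mathcal{J})$. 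Moreover your treatment of limits $\delta$ with $\cf\delta<\kappa$ is wrong as stated: ``one simply takes unions'' yields a partial isomorphism of $\bigcup_\eta\mathcal{B}_{\xi_\eta}$, but your inductive hypothesis demands a \emph{single} unitary $w_\delta$ with $w_\delta^*w_{\xi_\eta}$ commuting with $\mathcal{B}_{\xi_\eta}$ modulo $\mathcal{J}$ for every $\eta$, and amalgamating a chain of pairwise-coherent unitaries into one is a genuine problem --- in the actual proof this is precisely where a uniform ultrafilter on $\cf\delta$ is invoked, even in a much simpler setting. (By contrast, the successor step in your scheme is the easy part: since each $\phi_\xi$ is globally $\Ad(w_\xi)$, both ``forth'' and ``back'' amount to enlarging $\mathcal{B}_\xi$.) As written, the recursion does not close at any limit stage.

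The paper avoids the C*-algebraic machinery of \cite{PW} entirely. It filters $\mathcal{B}(\mathcal{H})$ not by $\le\kappa$-sized subalgebras along a dense enumeration, but by the algebras $\mathcal{D}[C]$ of operators block-diagonal with respect to $\ell^2(\kappa)=\bigoplus_{\delta\in C}\ell^2([\delta,\clubsucc{C}{\delta}))$ for clubs $C\subseteq\kappa$; a covering lemma (using regularity of $\kappa$) shows every operator lies in some $\mathcal{D}[C]$, and $2^\kappa=\kappa^+$ is used to enumerate all clubs in order type $\kappa^+$. The local automorphisms are conjugations by \emph{diagonal} unitaries with entries in $\{\pm1\}$, so coherence and bifurcation reduce to an elementary criterion (Lemma~\ref{key}) for when the ratio of two diagonal functions is eventually constant on the intervals of a club, and the entire construction becomes combinatorics of clubs and functions $\kappa\to\{\pm1\}$. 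If you want to salvage your outline, restrict to diagonal unitaries and club filtrations from the start: then the successor step is a two-line definition, the small-cofinality limit is an ultrafilter limit of the diagonal functions, and the cofinality-$\kappa$ limit is a diagonal-intersection argument.
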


  \begin{theorem}
    \label{small.intro}
    Let $\mathcal{H}$ be a Hilbert space of dimension $\aleph_1$, let $\mathcal{J}$ be the ideal of operators on $\mathcal{H}$ whose range has dimension $< \aleph_1$, and let $\mathcal{K}$ be the ideal of compact operators.  If CH holds, then $\mathcal{J}/\mathcal{K}$ has $2^{\aleph_1}$-many outer automorphisms.
  \end{theorem}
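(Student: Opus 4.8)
The plan is to recognise $\mathcal{J}/\mathcal{K}$ as a union of copies of the ordinary Calkin algebra and to transport the outer automorphisms built by Phillips and Weaver.

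\emph{Structure of $\mathcal{J}/\mathcal{K}$.} Fix an orthonormal basis $\seq{e_\xi}{\xi<\omega_1}$ of $\mathcal{H}$, write $\mathcal{H}_C$ for the closed span of $\{e_\xi:\xi\in C\}$ with $P_C$ the projection onto it, and identify $\mathcal{B}(\mathcal{H}_C)$ with the corner $P_C\mathcal{B}(\mathcal{H})P_C$. Using the polar decomposition $T=V|T|$ one sees that $T\in\mathcal{J}$ iff both $\overline{\ran T}$ and $(\ker T)^\perp=\overline{\ran T^*}$ are separable (they are unitarily isomorphic via $V$), and then $T=P_CTP_C$ for any countable $C$ with $\mathcal{H}_C\supseteq\overline{\ran T}+(\ker T)^\perp$; likewise $T\in\mathcal{K}$ iff $T\in\mathcal{K}(\mathcal{H}_C)$ for some countable $C$. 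Hence $\mathcal{J}=\bigcup\{\mathcal{B}(\mathcal{H}_C):C\in[\omega_1]^{\le\aleph_0}\}$, $\mathcal{K}=\bigcup\{\mathcal{K}(\mathcal{H}_C):C\in[\omega_1]^{\le\aleph_0}\}$, and so $\mathcal{J}/\mathcal{K}=\bigcup\{\mathcal{C}(\mathcal{H}_C):C\in[\omega_1]^{\le\aleph_0}\}$, a directed union of Calkin algebras along the obvious corner inclusions. Fix once and for all a countably infinite $I_0\subseteq\omega_1$ and put $p_0=\pi(P_{I_0})$ and $\mathcal{C}_0=\mathcal{C}(\mathcal{H}_{I_0})=p_0\,(\mathcal{J}/\mathcal{K})\,p_0$; this is a unital corner of $\mathcal{J}/\mathcal{K}$, isomorphic to the Calkin algebra $\mathcal{C}(\ell^2)$, with unit $p_0$. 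Note that under CH the density character of $\mathcal{J}/\mathcal{K}$ is $\aleph_1$.

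\emph{Extending automorphisms of the corner.} I would next show that every automorphism $\varphi$ of $\mathcal{C}_0$ extends to an automorphism of $\mathcal{J}/\mathcal{K}$, and that distinct $\varphi$ extend to distinct automorphisms. For this one argues that $\mathcal{J}/\mathcal{K}$ is countably degree-$1$ saturated --- which should be read off the description above, as $\mathcal{J}/\mathcal{K}$ is a directed colimit over $[\omega_1]^{\le\aleph_0}$ of the countably saturated corona algebras $\mathcal{C}(\mathcal{H}_C)$ along corner embeddings with infinite-rank complementary corner, in the spirit of the countable saturation results for coronas and reduced products --- and then runs a back-and-forth of length $\omega_1$: enumerate norm-dense subsets of $\mathcal{C}_0$ and of $\mathcal{J}/\mathcal{K}$ in order type $\omega_1$ (possible under CH) and build an increasing, continuous $\omega_1$-chain of partial isomorphisms between separable subalgebras of $\mathcal{J}/\mathcal{K}$, maintaining the invariant that the restriction to (the part of $\mathcal{C}_0$ seen so far) agrees with $\varphi$, using saturation at successor stages to supply the image, resp.\ preimage, of one more point. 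Domains and ranges stay separable at limit stages below $\omega_1$, so only countable saturation is used; CH enters only to guarantee that $\omega_1$ steps exhaust the dense sets. The union of the chain closes off to an automorphism $\Phi$ of $\mathcal{J}/\mathcal{K}$ with $\Phi\restriction\mathcal{C}_0=\varphi$.

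\emph{Outerness transfers upward.} Suppose $\Phi$ is an automorphism of $\mathcal{J}/\mathcal{K}$ restricting to an automorphism of the corner $\mathcal{C}_0$ (so $\Phi(p_0)=p_0$), and suppose $\Phi=\Ad(u)$ for a unitary $u$ in the multiplier algebra of $\mathcal{J}/\mathcal{K}$. Then $v:=up_0$ satisfies $v^*v=p_0u^*up_0=p_0$ and $vv^*=up_0u^*=\Phi(p_0)=p_0$, so $v=p_0vp_0$ is a unitary in the corner $p_0(\mathcal{J}/\mathcal{K})p_0=\mathcal{C}_0$, and $\Phi(a)=up_0ap_0u^*=vav^*$ for every $a\in\mathcal{C}_0$; thus $\Phi\restriction\mathcal{C}_0=\Ad(v)$ is \emph{inner} in $\mathcal{C}_0$. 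Contrapositively, if $\varphi$ is an outer automorphism of $\mathcal{C}_0\cong\mathcal{C}(\ell^2)$, then its extension $\Phi$ is an outer automorphism of $\mathcal{J}/\mathcal{K}$.

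\emph{Conclusion and the main obstacle.} Under CH the theorem of Phillips and Weaver~\cite{PW} gives $2^{2^{\aleph_0}}$ pairwise distinct automorphisms of $\mathcal{C}(\ell^2)$; since at most $2^{\aleph_0}$ of these are inner, $2^{2^{\aleph_0}}=2^{\aleph_1}$ of them are outer. Applying the previous two paragraphs to each, and noting that $\Phi\ne\Phi'$ whenever $\Phi\restriction\mathcal{C}_0\ne\Phi'\restriction\mathcal{C}_0$, we obtain $2^{\aleph_1}$ pairwise distinct outer automorphisms of $\mathcal{J}/\mathcal{K}$. I expect the real work, and the main obstacle, to be in the extension step: verifying that $\mathcal{J}/\mathcal{K}$ carries enough saturation for the back-and-forth (equivalently, that automorphisms of the corner $\mathcal{C}_0$ extend). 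Here the point is precisely that $\mathcal{J}/\mathcal{K}$ is a colimit of Calkin algebras rather than a single corona --- the naive ``$M_2$-amplification'' extension breaks at the countable limit stages of the natural chain $\langle\mathcal{C}(\mathcal{H}_{I_\alpha})\rangle$ --- and this is also the only place CH is used beyond the Phillips--Weaver construction.
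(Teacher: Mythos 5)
Your reduction to Phillips--Weaver, your counting, and your ``outerness transfers upward'' paragraph are all fine (the last is essentially the easy direction of the paper's Lemma~\ref{small lemma}). But the proof has a genuine gap exactly where you flag ``the real work'': you never establish that an automorphism $\varphi$ of the corner $\mathcal{C}_0$ extends to $\mathcal{J}/\mathcal{K}$, and the route you sketch is doubtful. Countable \emph{degree-$1$} saturation is not enough to run a back-and-forth that builds an isomorphism extending a prescribed partial map; for that you need full countable saturation \emph{and} you need $\varphi$ to be partial elementary as a map inside $\mathcal{J}/\mathcal{K}$, i.e.\ that $(\mathcal{J}/\mathcal{K},a)_{a\in\mathcal{C}_0}$ and $(\mathcal{J}/\mathcal{K},\varphi(a))_{a\in\mathcal{C}_0}$ realize the same types. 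Neither point is addressed. Moreover $\mathcal{J}/\mathcal{K}$ is not the corona of a $\sigma$-unital algebra, so the standard saturation theorems do not apply off the shelf, and the chain $\langle\mathcal{C}(\ell^2(\alpha))\rangle_{\alpha<\omega_1}$ is \emph{not} an elementary chain --- the inclusions are corner embeddings, the units $[P_\alpha]$ change at every stage, and the union is non-unital --- so full countable saturation cannot simply be read off from saturation of the pieces. As written, the central step of your argument is an unproved (and nontrivial) claim.

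The paper sidesteps all of this with an elementary ZFC device you are missing: a coherent sequence of injections $f_\alpha:\alpha\to\omega$ with $f_\beta\upharpoonright\alpha=^*f_\alpha$ for $\alpha<\beta<\omega_1$. Each $f_\alpha$ induces a unitary $U_\alpha:\ell^2(\alpha)\to\ell^2(A_\alpha)$ with $A_\alpha\subseteq\omega$, and coherence \emph{mod finite} means the resulting identifications of $\mathcal{C}(\ell^2(\alpha))$ with corners of $\mathcal{C}(\ell^2(\omega))$ agree modulo compacts --- which is exactly what survives in the quotient by $\mathcal{K}$, including at the countable limit stages where you correctly observe the naive chain construction breaks. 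One then defines $\Psi$ on all of $\mathcal{J}/\mathcal{K}=\bigcup_\alpha\mathcal{C}(\ell^2(\alpha))$ at once by conjugating into $\ell^2(\omega)$, applying a single Phillips--Weaver automorphism $\Phi$, and conjugating back; well-definedness is immediate from coherence, no saturation or back-and-forth is needed, and CH enters only through Phillips--Weaver. This also yields the stronger conclusion that $\Psi$ is outer on every $\mathcal{C}(\ell^2(\alpha))$, via the nontrivial direction of Lemma~\ref{small lemma} applied to the subspaces $\ell^2(A_\alpha)$ of $\ell^2(\omega)$. If you want to salvage your approach, replace the saturation argument by such a coherent family; otherwise you must actually prove the extension theorem for automorphisms of the corner, which is a substantial open-ended task rather than a routine verification.
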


  Theorem~\ref{large.intro} is perhaps most striking in the case $\kappa = \aleph_1$, for in this case its only set-theoretic assumption, $2^{\aleph_1} = \aleph_2$, follows already from PFA.  Hence in a model of PFA, there are many outer automorphisms of $\mathcal{B}(\mathcal{H}) / \mathcal{J}$, and yet no outer automorphisms of $\mathcal{B}(\mathcal{H})/\mathcal{K}$.
  

  Our notation is mostly standard.  All Hilbert spaces considered are complex Hilbert spaces.  When $\mathcal{H}$ is a Hilbert space, $\mathcal{B}(\mathcal{H})$ denotes the C*-algebra of bounded linear operators from $\mathcal{H}$ to $\mathcal{H}$, $\mathcal{K}(\mathcal{H})$ denotes the closed $*$-ideal in $\mathcal{B}(\mathcal{H})$ given by the compact operators on $\mathcal{H}$, and $\mathcal{J}(\mathcal{H})$ denotes the $*$-ideal of operators whose range has dimension strictly less than the dimension of $\mathcal{H}$.  When the Hilbert space $\mathcal{H}$ is understood we will often drop it in our notation and just use $\mathcal{B},\mathcal{K}$, and $\mathcal{J}$.  Note that when $\mathcal{H}$ is nonseparable, $\mathcal{J}$ is already norm-closed, and
  \[
    \mathcal{K} \subset \mathcal{J} \subset \mathcal{B}
  \]
  If $x\in\mathcal{B}$ then we will use $[x]_{\mathcal{K}}$ and $[x]_{\mathcal{J}}$ to denote the quotients of $x$ by $\mathcal{K}(\mathcal{H})$ and $\mathcal{J}(\mathcal{H})$ respectively.  When $A$ is a set, we will write $\ell^2(A)$ for the Hilbert space of square-summable functions $\xi : A\to\mathbb{C}$.  We will also often write $\mathcal{B}(\mathcal{H}) = \mathcal{B}_A$, $\mathcal{J}(\mathcal{H}) = \mathcal{J}_A$, and $\mathcal{K}(\mathcal{H}) = \mathcal{K}_A$ when $\mathcal{H} = \ell^2(A)$.  When $A\subseteq B$ we will identify $\ell^2(A)$ with a closed subspace of $\ell^2(B)$ in the obvious way.  Finally, if $A$ is a C*-algebra and $x$ is an element of $A$ then $\Ad{x} : A\to A$ is the map $a\mapsto xax^*$.  When $A$ has a multiplicative unit and $x$ is a unitary element of $A$, i.e. $x^*x = xx^* = 1_A$, then $\Ad{x}$ is an automorphism of $A$.

  \section{Large ideals}
  \label{sec:large}

  In this section we prove Theorem~\ref{large.intro}.  Before beginning the proof we will need some notation;
  \begin{definition}
    If $C$ is club in $\kappa$, we define
    \[
      x\in\mathcal{D}[C] \iff \forall \alpha\in C\quad\mbox{$\ell^2(\alpha)$ is an invariant subspace of $x$ and $x^*$}
    \]
  \end{definition}
  Note that $\mathcal{D}[C]$ is a C*-subalgebra of $\mathcal{B}_\kappa$, and in fact is a von Neumann subalgebra of $\mathcal{B}_\kappa$, though we will not use this latter fact.  We also set down some convenient notation for the successor of an ordinal in a club;
  \begin{definition}
    If $C$ is club in $\kappa$ and $\alpha\in C$, then $\clubsucc{C}{\alpha}$ denotes the minimal element of $C$ strictly greater than $\alpha$.
  \end{definition}
  Note that if $C$ is club in $\kappa$, then we have in fact
  \[
    x\in\mathcal{D}[C] \iff \forall \alpha\in C\quad\mbox{$\ell^2([\alpha,\clubsucc{C}{\alpha}))$ is an invariant subspace of $x$}
  \]
  Finally, if $A,B\subseteq\kappa$ then we write $A\subseteq^* B$ if and only if $|A\setminus B| < \kappa$.

  \begin{lemma}
    \label{covering}
    For every $x\in\mathcal{B}_\kappa$, there is some club $C$ in $\kappa$ such that $x\in\mathcal{D}[C]$.
  \end{lemma}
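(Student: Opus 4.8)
The plan is to show that for a fixed $x \in \mathcal{B}_\kappa$, the set of ordinals $\alpha < \kappa$ for which $\ell^2(\alpha)$ is invariant under both $x$ and $x^*$ is closed and unbounded. Closure is essentially automatic: if $\alpha_\xi \to \alpha$ is an increasing sequence of such ordinals, then $\ell^2(\alpha) = \overline{\bigcup_\xi \ell^2(\alpha_\xi)}$, and invariance passes to the closure of an increasing union of invariant subspaces (using that $x$ and $x^*$ are bounded, hence continuous). So the real work is unboundedness, which I would prove by a standard closing-off argument.

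First I would observe that $\ell^2(\alpha)$ is invariant under $x$ and $x^*$ precisely when, for every basis vector $e_\beta$ with $\beta < \alpha$, both $x e_\beta$ and $x^* e_\beta$ lie in $\ell^2(\alpha)$, i.e. have support contained in $\alpha$. For a single vector $\xi = x e_\beta$, the support $\{\gamma : \langle \xi, e_\gamma\rangle \neq 0\}$ is countable since $\xi \in \ell^2(\kappa)$. So define a function $F : \kappa \to \kappa$ by letting $F(\beta)$ be any ordinal strictly above the suprema of the (countable) supports of $x e_\beta$ and $x^* e_\beta$; this is well-defined because $\kappa$ is regular and uncountable, so a countable subset of $\kappa$ is bounded. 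Then the set $C$ of ordinals $\alpha < \kappa$ closed under $F$ — i.e. $F[\alpha] \subseteq \alpha$ — is club in $\kappa$ (again using regularity of $\kappa$ for unboundedness: iterate $F$ along an $\omega$-chain starting above any given ordinal and take the sup). For any such $\alpha$ and any $\beta < \alpha$, the supports of $x e_\beta$ and $x^* e_\beta$ are contained in $F(\beta) < \alpha$, so $\ell^2(\alpha)$ is invariant under $x$ and $x^*$; hence $x \in \mathcal{D}[C]$.

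The main obstacle — really the only place set theory enters — is the regularity of $\kappa$: it is used twice, once to see that the countable support of each $x e_\beta$ is bounded below $\kappa$ (so $F$ is total), and once to see that the closure points of $F$ form a club rather than merely a stationary or cofinal set. If $\kappa$ had countable cofinality the first step would already fail. I would also take brief care with the equivalent reformulation stated just before the lemma: since each $\ell^2(\alpha)$ for $\alpha \in C$ is invariant, so is each interval space $\ell^2([\alpha, \clubsucc{C}{\alpha}))$, as it is the orthogonal complement of $\ell^2(\alpha)$ inside $\ell^2(\clubsucc{C}{\alpha})$ and an operator commuting with a projection leaves the complementary subspace invariant. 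No genuine difficulty is expected here; the lemma is a soft "support is small" observation packaged in the language of clubs.
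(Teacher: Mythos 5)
Your proof is correct and complete, and it reaches the conclusion by a slightly different route than the paper. The paper's proof is a one-liner using elementary submodels: it takes a continuous chain of substructures $M_\alpha \prec H(\theta)$ of size $<\kappa$ with $x$ and $\ell^2(\kappa)$ in $M_0$, and asserts that $\ell^2(\delta)$ is invariant for $\delta = \sup(M_\alpha\cap\kappa)$, these $\delta$ forming a club. The underlying mathematics is the same as yours: an elementary submodel containing $x$ is automatically closed under your function $F$ (which is definable from $x$ and $\ell^2(\kappa)$), and the paper's ``we clearly have'' at the key step is exactly your observation that each $xe_\beta$ and $x^*e_\beta$ has countable, hence bounded, support, since $\kappa$ is regular and uncountable. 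What your hands-on closing-off argument buys is that it makes explicit precisely where regularity and uncountability enter (and correctly flags that the argument would break at countable cofinality), while dispensing with the $H(\theta)$ machinery; what the paper's version buys is brevity. Your side remarks are also sound: closure of the set of invariant $\alpha$'s follows from continuity of $x$ and $x^*$ on the closure of an increasing union, and the interval reformulation follows because invariance of $\ell^2(\alpha)$ under both $x$ and $x^*$ makes the projection onto $\ell^2(\alpha)$ commute with $x$, so differences of such projections have $x$-invariant ranges. No gaps.
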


  \begin{proof}
    Let $\theta$ be large and regular, and let $M_\alpha$, for $\alpha < \kappa$, be a club of elementary substructures of $H(\theta)$, each of size $< \kappa$, and with $x$ and $\ell^2(\kappa)$ in $M_0$.  Then if $\delta = \sup(M_\alpha\cap\kappa)$, we clearly have that $\ell^2(\delta)$ is an invariant subspace of $x$, and such ordinals $\delta$ make up a club in $\kappa$.
  \qed\end{proof}

  \begin{lemma}
    If $C\subseteq^* \widetilde{C}$ are clubs in $\kappa$, then $\mathcal{D}[\widetilde{C}] \subseteq_{\mathcal{J}} \mathcal{D}[C]$, by which we mean
    \[
      \forall x\in\mathcal{D}[\widetilde{C}]\;\exists y\in\mathcal{D}[C]\quad x - y\in\mathcal{J}
    \]
  \end{lemma}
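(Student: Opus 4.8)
The plan is to obtain $y$ by compressing $x$ to a tail of $\ell^2(\kappa)$ on which $C$ and $\widetilde C$ no longer differ. Since $\kappa$ is regular and $|C\setminus\widetilde C|<\kappa$, the set $C\setminus\widetilde C$ is bounded in $\kappa$; as $\widetilde C$ is unbounded, I would fix $\gamma\in\widetilde C$ with $\gamma>\sup(C\setminus\widetilde C)$, and note that then every $\alpha\in C$ with $\alpha>\gamma$ also lies in $\widetilde C$. Let $P$ be the orthogonal projection of $\ell^2(\kappa)$ onto $\ell^2([\gamma,\kappa))$, so that $\mathbf 1-P$ is the projection onto $\ell^2(\gamma)$, whose range has dimension $|\gamma|<\kappa$; hence $\mathbf 1-P\in\mathcal J$.

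Given $x\in\mathcal D[\widetilde C]$, I would set $y=PxP$ and first check that $x-y\in\mathcal J$. Writing $x-PxP=(\mathbf 1-P)x+Px(\mathbf 1-P)$, the first summand has range inside $\ell^2(\gamma)$ and so lies in $\mathcal J$ by definition, while the second is $P$ applied to $x(\mathbf 1-P)$, whose range lies inside $x[\ell^2(\gamma)]$ and hence has dimension $<\kappa$, so that $x(\mathbf 1-P)\in\mathcal J$ and therefore $Px(\mathbf 1-P)\in\mathcal J$ as well. Thus $x-y\in\mathcal J$.

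The real content is the verification that $y\in\mathcal D[C]$, i.e.\ that $\ell^2(\alpha)$ is an invariant subspace of $y$ and of $y^*=Px^*P$ for every $\alpha\in C$ (recall $x^*\in\mathcal D[\widetilde C]$ too, by the definition of $\mathcal D$). If $\alpha\le\gamma$, then $\ell^2(\alpha)\subseteq\ell^2(\gamma)=\ker P$, so $y$ and $y^*$ send it to $\{0\}\subseteq\ell^2(\alpha)$. If $\alpha>\gamma$, then $\alpha\in\widetilde C$ by the choice of $\gamma$, and also $\gamma\in\widetilde C$; since $x,x^*\in\mathcal D[\widetilde C]$, the subspaces $\ell^2(\gamma)$ and $\ell^2(\alpha)$ are invariant under both $x$ and $x^*$, hence so is their orthogonal difference $\ell^2([\gamma,\alpha))$. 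Now take $\xi\in\ell^2(\alpha)$ and decompose $\xi=\xi_0+\xi_1$ with $\xi_0\in\ell^2(\gamma)$ and $\xi_1\in\ell^2([\gamma,\alpha))$; then $P\xi=\xi_1$, we have $x\xi_1\in\ell^2([\gamma,\alpha))$, and $P$ fixes $x\xi_1$, so $y\xi=x\xi_1\in\ell^2(\alpha)$. The identical computation with $x^*$ in place of $x$ handles $y^*$, completing the verification.

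I expect the only subtle point to be the opening move — replacing $C\subseteq^*\widetilde C$ by an honest tail $[\gamma,\kappa)$ — which is exactly where regularity of $\kappa$ enters; everything after that is a direct check. (If one prefers, $\gamma$ may be chosen in $C$ instead of in $\widetilde C$, in which case $y\in\mathcal D[C]$ can alternatively be verified using the interval form of the definition, splitting on whether $\clubsucc{C}{\alpha}\le\gamma$ or $\alpha\ge\gamma$.)
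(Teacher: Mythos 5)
Your proposal is correct and follows essentially the same route as the paper: both choose $\gamma$ past the bounded set where $C$ and $\widetilde{C}$ differ, let $P$ be the projection onto $\ell^2([\gamma,\kappa))$, and take $y = PxP$. You simply carry out in detail the verifications (that $x - PxP \in \mathcal{J}$ and that $PxP \in \mathcal{D}[C]$, via invariance of $\ell^2([\gamma,\alpha))$ under $x$ and $x^*$) that the paper leaves implicit.
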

  \begin{proof}
    If $\gamma < \kappa$ is such that $C\cap [\gamma,\kappa) \subseteq \widetilde{C}$, then for every $\delta\in \widetilde{C}$,
    \[
      \delta\ge \gamma\implies [\delta,\clubsucc{\widetilde{C}}{\delta}) \subseteq [\delta,\clubsucc{C}{\delta})
    \]
    Thus if $x\in\mathcal{D}[\widetilde{C}]$, we see that $PxP\in\mathcal{D}[C]$, where $P$ is the projection onto the subspace $\ell^2([\gamma,\kappa))$.
  \qed\end{proof}

  \begin{lemma}
    \label{key}
    Let $C$ be club in $\kappa$ and let $u$ and $v$ be unitary operators on $\ell^2(\kappa)$, which are diagonal with respect to the standard basis; say $f,g :\kappa\to\mathbb{T}$ are the diagonal values of $u$ and $v$ respectively.  Then $\Ad{[u]_{\mathcal{J}}}$ and $\Ad{[v]_{\mathcal{J}}}$ agree on $\mathcal{D}[C]/\mathcal{J}$ if and only if there is some $\epsilon < \kappa$ such that the map
    \[
      \xi\mapsto \frac{f(\xi)}{g(\xi)} = f(\xi)\overline{g(\xi)}
    \]
    is constant on each interval of the form $[\delta,\clubsucc{C}{\delta})$ with $\delta\in C\cap [\epsilon,\kappa)$.
  \end{lemma}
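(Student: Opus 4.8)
The plan is to unwind what it means for the two inner automorphisms $\Ad{[u]_{\mathcal{J}}}$ and $\Ad{[v]_{\mathcal{J}}}$ to agree on the quotient $\mathcal{D}[C]/\mathcal{J}$, and to translate that into a pointwise condition on $h := f\bar g : \kappa \to \mathbb{T}$. Note first that $\Ad{[u]_{\mathcal{J}}}$ and $\Ad{[v]_{\mathcal{J}}}$ agree on $\mathcal{D}[C]/\mathcal{J}$ if and only if $\Ad{[v^*u]_{\mathcal{J}}}$ is the identity on $\mathcal{D}[C]/\mathcal{J}$; since $v^*u$ is the diagonal unitary with diagonal values $h(\xi) = f(\xi)\overline{g(\xi)}$, we may assume without loss of generality that $v = 1$, $g \equiv 1$, and $f = h$, and we must show that $\Ad{[u]_{\mathcal{J}}}$ fixes $\mathcal{D}[C]/\mathcal{J}$ pointwise iff $h$ is eventually constant on the intervals $[\delta, \clubsucc{C}{\delta})$.

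For the easy direction, suppose $h$ is constant on each $[\delta, \clubsucc{C}{\delta})$ for $\delta \in C \cap [\epsilon,\kappa)$, with constant value $c_\delta \in \mathbb{T}$. Let $w$ be the diagonal unitary whose value on $[\delta, \clubsucc{C}{\delta})$ is $c_\delta$ for $\delta \in C \cap [\epsilon,\kappa)$ and (say) $1$ below $\epsilon$. Then $w \in \mathcal{D}[C]$ (it is diagonal, hence block-diagonal along $C$), and $u - w$ is supported on $\ell^2(\epsilon)$, hence lies in $\mathcal{J}$ since $\epsilon < \kappa$. But $w$ is central in $\mathcal{D}[C]$: for any $x \in \mathcal{D}[C]$, $x$ preserves each block $\ell^2([\delta,\clubsucc{C}{\delta}))$, on which $w$ is a scalar, so $wx = xw$ on each block, hence everywhere. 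Therefore $\Ad{[u]_{\mathcal{J}}} = \Ad{[w]_{\mathcal{J}}}$ is the identity on $\mathcal{D}[C]/\mathcal{J}$.

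For the converse, suppose $h$ is not eventually constant on the $C$-blocks; I want to produce $x \in \mathcal{D}[C]$ with $uxu^* - x \notin \mathcal{J}$. The idea is to build $x$ as a block-diagonal operator, putting a suitable finite-rank operator $x_\delta$ on each block $\ell^2([\delta,\clubsucc{C}{\delta}))$ for which $h$ is non-constant; on such a block pick $\xi_\delta \neq \eta_\delta$ in the block with $h(\xi_\delta) \neq h(\eta_\delta)$ and let $x_\delta$ be the partial isometry $e_{\xi_\delta}\otimes \bar e_{\eta_\delta}$ (a rank-one matrix unit). Then $(u x_\delta u^*)$ is $\frac{h(\xi_\delta)}{h(\eta_\delta)} x_\delta$, so $u x_\delta u^* - x_\delta = \big(\tfrac{h(\xi_\delta)}{h(\eta_\delta)} - 1\big) x_\delta$, a nonzero multiple of a rank-one partial isometry. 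Taking $x = \bigoplus_\delta x_\delta$ over the (by assumption, cofinally many) bad blocks $\delta$, $x$ is a well-defined bounded operator in $\mathcal{D}[C]$, and $uxu^* - x$ has range meeting $\ell^2([\delta,\kappa))$ nontrivially for cofinally many $\delta$; one checks its range has dimension $\kappa$ (it contains the $\xi_\delta$-th basis vectors for a set of $\delta$'s of size $\kappa$, up to nonzero scalars), hence $uxu^* - x \notin \mathcal{J}$. The one subtlety, which is the main point to get right, is that "not eventually constant on blocks" must be used to extract a set of $\kappa$-many bad blocks: if there were fewer than $\kappa$ bad blocks, then (as in the easy direction) correcting $u$ on those blocks plus the bounded-below part would show $h$ *is* eventually constant on a tail, because a union of fewer than $\kappa$-many blocks, each of size $<\kappa$, together with a bounded initial segment, has size $<\kappa$ only if... — here one uses that $\kappa$ is regular, so fewer than $\kappa$ blocks each of size $<\kappa$ do \emph{not} necessarily have union of size $<\kappa$; instead the correct statement is that the bad blocks, if cofinal, already number $\kappa$ by regularity, and if bounded we are in the constant case. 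I expect this counting step — pinning down exactly why failure of the block-constancy condition yields $\kappa$-many witnesses whose associated rank-one operators assemble into something outside $\mathcal{J}$ — to be the part requiring the most care.
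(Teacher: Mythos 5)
Your proof is correct and follows essentially the same route as the paper's: reduce to the single diagonal unitary $v^*u$ with diagonal $h$, handle the easy direction by exhibiting a block-scalar central representative modulo $\mathcal{J}$, and handle the converse by assembling rank-one matrix units on the (unboundedly, hence by regularity of $\kappa$, $\kappa$-many) bad blocks into an element of $\mathcal{D}[C]$ whose commutator with $v^*u$ has range of dimension $\kappa$. The paper uses a self-adjoint swap $e_{\sigma_\delta}\leftrightarrow e_{\tau_\delta}$ on each bad block instead of your one-sided matrix unit, but this is a cosmetic difference, and your closing resolution of the counting step (unbounded in regular $\kappa$ implies size $\kappa$; bounded implies $(*)$ holds) is exactly the right one.
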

  \begin{proof}
    Let $h(\xi) = f(\xi)\overline{g(\xi)}$ for each $\xi < \kappa$.  We will write $(*)$ for the condition
    \[
      \exists\epsilon\;\forall\delta\in C\quad \delta\ge \epsilon \implies \mbox{$h$ is constant on the interval $[\delta,\clubsucc{C}{\delta})$}
    \]
    as in the conclusion of the lemma.  Now, note that $u$ and $v$ are trivially in the algebra $\mathcal{D}[C]$.  The following are equivalent;
    \begin{enumerate}
      \item  $\Ad{[u]_{\mathcal{J}}}$ and $\Ad{[v]_{\mathcal{J}}}$ agree on $\mathcal{D}[C]/\mathcal{J}$,
      \item  for each $x\in\mathcal{D}[C]$, $uxu^* - vxv^*$ is in $\mathcal{J}$,
      \item  \label{center} $[v^*u]_{\mathcal{J}}$ is in the center of the algebra $\mathcal{D}[C]/\mathcal{J}$.
    \end{enumerate}
    We will show that condition~\eqref{center} holds if and only if $(*)$ holds.  First suppose $(*)$ does not hold; then there is an unbounded subset $A$ of $C$, and sequences $\sigma_\delta$,$\tau_\delta$ indexed by $\delta\in A$, such that for each $\delta\in A$, $\delta\le \sigma_\delta < \tau_\delta < \clubsucc{C}{\delta}$ and $h(\sigma_\delta) \neq h(\tau_\delta)$.  Let $x$ be the operator defined by
    \[
      x(e_\alpha) =
        \left\{
          \begin{array}{ll}
            e_{\sigma_\delta} & \mbox{$\alpha = \tau_\delta$ for some $\delta\in A$} \\
            e_{\tau_\delta} & \mbox{$\alpha = \sigma_\delta$ for some $\delta\in A$} \\
            0 & \mbox{otherwise}
          \end{array}
        \right.
    \]
    Then $x\in\mathcal{D}[C]$, and for each $\delta\in A$,
    \[
      (v^*u x)e_{\sigma_\delta} = h(\tau_\delta)e_{\tau_\delta}\qquad (x v^*u)e_{\sigma_\delta} = h(\sigma_\delta)e_{\tau_\delta}
    \]
    It follows that $v^*u x - x v^*u$ is not in the ideal $\mathcal{J}$, so condition~\eqref{center} does not hold.  Now suppose $(*)$ does hold, and choose $\epsilon$ as in this condition.  If $x\in\mathcal{D}[C]$, then for all $\alpha \ge \epsilon$, if $\alpha\in [\delta,\clubsucc{C}{\delta})$ where $\delta\in C$ then we have
    \[
      (v^*u x)e_\alpha = h(\alpha) x e_\alpha = (x v^* u)e_\alpha
    \]
    and it follows that $P(v^* u)P$ is in the center of $\mathcal{D}[C]$, where $P$ is the projection onto $\ell^2([\epsilon,\kappa))$.
  \qed\end{proof}

  We are now ready to prove Theorem~\ref{large.intro}.
  \begin{proof}
    Let $\seq{E_\alpha}{\alpha \in\lim(\kappa^+)}$ enumerate the clubs in $\kappa$.  We will construct a sequence of clubs $C_s$ in $\kappa$, and functions $f_s :\kappa\to\mathbb{T}$, indexed by $s\in 2^{<\kappa^+}$, such that
    \begin{enumerate}
      \item  \label{coherence.clubs}
        If $s\subset t$, then $C_t \subseteq^* C_s$.
      \item  \label{coherence.unitaries}
        If $s\subset t$, then there is an $\epsilon < \kappa$ such that for every $\delta\in C_s$ with $\delta \ge \epsilon$, the function $f_s\overline{f_t}$ is constant on the interval $[\delta,\clubsucc{C_s}{\delta})$.
      \item  \label{branching}
        For all $s$, $C_{s^\smallfrown 0} = C_{s^\smallfrown 1}$, and for unboundedly many $\delta\in C_{s^\smallfrown 0} = C_{s^\smallfrown 1} = C$, the function $f_{s^\smallfrown 0}\overline{f_{s^\smallfrown 1}}$ is not constant on $[\delta,\clubsucc{C}{\delta})$.
      \item  \label{cofinality}
        If $s$ has length some limit ordinal $\alpha < \kappa^+$, then $C_s \subseteq^* E_\alpha$.
    \end{enumerate}

    \begin{claim}
      This suffices.
    \end{claim}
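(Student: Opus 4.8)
The plan is to show that from a sequence $\seq{C_s, f_s}{s \in 2^{<\kappa^+}}$ satisfying (1)--(4), we manufacture $2^{\kappa^+}$-many outer automorphisms of $\mathcal{B}_\kappa/\mathcal{J}$. First I would fix a branch $b \in 2^{\kappa^+}$ and explain how to read off an automorphism $\Phi_b$. Using (1), the clubs $C_{b\restriction\alpha}$ form a $\subseteq^*$-decreasing tower; for each $x \in \mathcal{B}_\kappa$, Lemma~\ref{covering} gives a club $C$ with $x \in \mathcal{D}[C]$, and by (4) together with a diagonal intersection / pressing-down argument we can find $\alpha < \kappa^+$ with $C_{b\restriction\alpha} \subseteq^* C$ (here is where $2^\kappa = \kappa^+$ is used: there are only $\kappa^+$ clubs, so the tower is cofinal in the $\subseteq^*$ order on clubs, via the enumeration $\seq{E_\alpha}{\alpha}$ and condition (4)). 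Then the second lemma of the section lets us replace $x$ modulo $\mathcal{J}$ by some $y \in \mathcal{D}[C_{b\restriction\alpha}]$. I would set $\Phi_b([x]_{\mathcal{J}}) = [u_{b\restriction\alpha}]_{\mathcal{J}} [y]_{\mathcal{J}} [u_{b\restriction\alpha}]_{\mathcal{J}}^*$ where $u_s$ is the diagonal unitary with values $f_s$; the point of condition (2) and Lemma~\ref{key} is exactly that this is well-defined independently of the choice of $\alpha$ (and of $y$), since for $\beta \geq \alpha$ the maps $\Ad{[u_{b\restriction\alpha}]_{\mathcal{J}}}$ and $\Ad{[u_{b\restriction\beta}]_{\mathcal{J}}}$ agree on $\mathcal{D}[C_{b\restriction\beta}]/\mathcal{J}$.

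Next I would check that $\Phi_b$ is a well-defined endomorphism: it is clearly $*$-linear and multiplicative on each $\mathcal{D}[C_{b\restriction\alpha}]/\mathcal{J}$ because $\Ad$ of a unitary is, and any two elements of the quotient lie together in some common $\mathcal{D}[C_{b\restriction\alpha}]/\mathcal{J}$ by taking a large enough $\alpha$ (intersect the two clubs, then go up the tower). That $\Phi_b$ is bijective follows from the same recipe applied to $u_s^*$, producing a two-sided inverse. So each $\Phi_b$ is an automorphism of $\mathcal{B}_\kappa/\mathcal{J}$.

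Then comes the separation argument: for $b \neq b'$ in $2^{\kappa^+}$, I claim $\Phi_b \neq \Phi_{b'}$, and in fact they differ on $\mathcal{D}[C]/\mathcal{J}$ for suitable $C$. Let $s = b\restriction\alpha = b'\restriction\alpha$ be the longest common initial segment, so $\{b(\alpha), b'(\alpha)\} = \{0,1\}$ and, by (3), $C_{s^\smallfrown 0} = C_{s^\smallfrown 1} =: C$ while $f_{s^\smallfrown 0}\overline{f_{s^\smallfrown 1}}$ is non-constant on unboundedly many intervals $[\delta, \clubsucc{C}{\delta})$. On $\mathcal{D}[C]/\mathcal{J}$, $\Phi_b$ agrees with $\Ad{[u_{s^\smallfrown b(\alpha)}]_{\mathcal{J}}}$ and $\Phi_{b'}$ with $\Ad{[u_{s^\smallfrown b'(\alpha)}]_{\mathcal{J}}}$ (using (2) to pass from level $\alpha+1$ up along $b$, resp.\ $b'$, which only shrinks the relevant club and changes $f$ by something eventually constant on the intervals). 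By Lemma~\ref{key} these two $\Ad$-maps disagree on $\mathcal{D}[C]/\mathcal{J}$ precisely because $(*)$ fails for the pair $f_{s^\smallfrown 0}, f_{s^\smallfrown 1}$, which is condition (3). Hence the $\Phi_b$ are pairwise distinct, giving $2^{\kappa^+}$ automorphisms.

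Finally I would argue that all but at most $2^\kappa = \kappa^+$ of them are outer. Inner automorphisms of $\mathcal{B}_\kappa/\mathcal{J}$ are implemented by unitaries of $\mathcal{B}_\kappa/\mathcal{J}$, and since $\mathcal{B}_\kappa$ has a dense subset of size $2^\kappa$ there are at most $2^\kappa$ of these; as $2^\kappa = \kappa^+ < 2^{\kappa^+}$, the map $b \mapsto \Phi_b$ cannot land in the inner automorphisms more than $\kappa^+$ times, so $2^{\kappa^+}$-many $\Phi_b$ are outer. The main obstacle I anticipate is the well-definedness of $\Phi_b$, i.e.\ verifying that the recipe does not depend on the choices made and genuinely respects the algebra operations; this is where conditions (1), (2), (4) and Lemma~\ref{key} all have to be combined carefully, in particular making sure the tower $\seq{C_{b\restriction\alpha}}{\alpha<\kappa^+}$ is $\subseteq^*$-cofinal among all clubs so that every $[x]_{\mathcal{J}}$ is actually in the domain.
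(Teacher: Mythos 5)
Your proposal is correct and follows essentially the same route as the paper: define $\Phi_\zeta$ on each $\mathcal{D}[C_{\zeta\upharpoonright\alpha}]/\mathcal{J}$ as $\Ad{[u_{\zeta\upharpoonright\alpha}]_{\mathcal{J}}}$, use (1), (2) and Lemma~\ref{key} for well-definedness, use (4), the enumeration $\seq{E_\alpha}{\alpha}$ (this is where $2^\kappa=\kappa^+$ enters --- no pressing-down is needed, just that the club found by Lemma~\ref{covering} is some $E_\alpha$) and the $\subseteq_{\mathcal{J}}$ lemma for totality, and use (3) with Lemma~\ref{key} to separate branches. You also spell out the cardinality argument that at most $2^\kappa=\kappa^+<2^{\kappa^+}$ of these can be inner, which the paper leaves implicit; that is a welcome addition rather than a deviation.
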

    \begin{proof}
      For each $s\in 2^{<\kappa^+}$, let $u_s$ be the diagonal unitary in $\mathcal{B}_\kappa$ with diagonal elements given by $f_s$.  For each $\zeta\in 2^{\kappa^+}$, and $x\in \mathcal{D}[C_{\zeta\upharpoonright \alpha}]$, define
      \[
        \Phi_\zeta ([x]) = [u_{\zeta\upharpoonright\alpha} x u_{\zeta\upharpoonright\alpha}^*]
      \]
      By~\eqref{coherence.clubs}, \eqref{coherence.unitaries}, and Lemma~\ref{key}, $\Phi_\zeta$ is well-defined on the union of the algebras $\mathcal{D}[C_{\zeta\upharpoonright\alpha}]/\mathcal{J}$, over $\alpha < \kappa^+$; and by~\eqref{cofinality}, and Lemma~\ref{covering}, it follows that $\Phi_\zeta$ is defined on all of $\mathcal{B}_\kappa / \mathcal{J}$.  Since on each $\mathcal{D}[C_{\zeta\upharpoonright\alpha}]$, $\Phi_\zeta$ agrees with $\Ad{[u_{\zeta\upharpoonright\alpha}]}$, $\Phi_\zeta$ is also an injective homomorphism.  Similar arguments show that $\Phi_\zeta^{-1}$ is defined on all of $\mathcal{B}_\kappa / \mathcal{J}$, and hence $\Phi_\zeta$ is an automorphism of this quotient algebra.  Finally, if $\zeta$ and $\eta$ are distinct members of $2^{\kappa^+}$, then by~\eqref{branching} and Lemma~\ref{key} we see that $\Phi_\zeta$ and $\Phi_\eta$ are distinct automorphisms.
    \qed\end{proof}

    We construct $C_s$ and $f_s$ by induction on the length of $s\in 2^{<\kappa^+}$.  It is useful to note that all the functions $f_s$ constructed in the following actually have range contained in $\{-1,+1\}$; when proving~\eqref{coherence.unitaries} and~\eqref{branching}, then, we will drop all mention of the conjugation.  In the base case we simply set $C_{\langle\rangle} = \kappa$ and $f_{\langle\rangle}(\alpha) = 1$ for all $\alpha < \kappa$.  For the successor case, let $s\in 2^{<\kappa^+}$ be given.  Set $C_{s^\smallfrown 0} = C_{s^\smallfrown 1} = \lim(C_s)$, $f_{s^\smallfrown 0} = f_s$, and
    \[
      f_{s^\smallfrown 1}(\alpha) =
      \left\{\begin{array}{ll}
        -f_s(\alpha) & \mbox{if there is $\delta\in \lim(C_s)$ such that $\delta\le \alpha < \clubsucc{C_s}{\delta}$} \\
        +f_s(\alpha) & \mbox{otherwise} \\
      \end{array}\right.
    \]
    Obviously, the function $f_s f_{s^\smallfrown 0} = f_s^2$ is constant on each interval of $C_s$ (in fact it is constant on all of $\kappa$).  The same holds for the function $f_s f_{s^\smallfrown 1}$; if $\delta\in\lim(C_s)$ then this function has a constant value of $-1$ on all of $[\delta,\clubsucc{C_s}{\delta})$, whereas if $\delta\in C_s\setminus\lim(C_s)$ then it has a constant value of $+1$ on this interval.  Hence condition~\eqref{coherence.unitaries} is satisfied in the inductive step.  As for condition~\eqref{branching}, we note that for every $\delta\in\lim(C_s)$, the function $f_{s^\smallfrown 0} f_{s^\smallfrown 1}$ is not constant on the interval $[\delta, \clubsucc{\lim(C_s)}{\delta})$, since this function has a value of $-1$ at $\delta$ and a value of $+1$ at $\clubsucc{C_s}{\delta} < \clubsucc{\lim(C_s)}{\delta}$.  It remains to consider the limit case.  Let $s\in 2^{<\kappa^+}$ be given, and let $\alpha$ be the length of $s$.  For $\beta < \alpha$, write $f_\beta = f_{s\upharpoonright\beta}$ and $C_\beta = C_{s\upharpoonright\beta}$.  By the inductive hypothesis, for every $\beta < \gamma < \alpha$ there is an $\epsilon < \kappa$ such that
    \[
      \forall \delta\in C_\beta\quad \delta \ge \epsilon\implies \mbox{$f_\beta f_\gamma$ is constant on the interval $[\delta,\clubsucc{C_\beta}{\delta})$}
    \]
    Let $\epsilon_\beta^\gamma$ be the minimal $\epsilon\in C_\beta$ satisfying the above.  We will define $f_s$ and $C_s$ in two different ways based on the cofinality of $\alpha$.  First, suppose $\theta = \cf{\alpha} < \kappa$, and let $\alpha_\eta$, for $\eta < \theta$, be an increasing and continuous sequence of ordinals which is cofinal in $\alpha$.  Define
    \[
      C_s = \left(\bigcap_{\eta < \theta} C_{\alpha_\eta}\right)\cap E_\alpha
    \]
    It remains to define $f_s$ and show that condition~\ref{coherence.unitaries} holds.  Choose a uniform ultrafilter $\widetilde{\mathcal{U}}$ over $\theta$, and let $\mathcal{U}_\alpha$ be the ultrafilter over $\alpha$ defined in the usual way from $\widetilde{\mathcal{U}}$ using the sequence $\seq{\alpha_\eta}{\eta < \theta}$.  Now for each $\xi < \kappa$ define
    \[
      f_s(\xi) = \lim_{\beta\in\mathcal{U}_\alpha} f_\beta(\xi)
    \]
    \begin{claim}
      For every $\beta < \alpha$, $f_\beta f_s$ is constant on each interval of a tail of intervals from $C_\beta$.
    \end{claim}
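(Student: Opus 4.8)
The plan is to use that, by construction, $\mathcal{U}_\alpha$ concentrates on the cofinal sequence $\seq{\alpha_\eta}{\eta<\theta}$, so that $f_s(\xi)$ is simply $\lim_{\eta\in\widetilde{\mathcal{U}}}f_{\alpha_\eta}(\xi)$; combined with $\theta=\cf(\alpha)<\kappa$ and the regularity of $\kappa$, this lets us amalgamate the $\theta$-many ``tail'' thresholds coming from the inductive hypothesis into a single threshold. Throughout we use that every $f_\beta$, and hence (being a two-valued ultralimit) also $f_s$, takes values in $\{-1,+1\}$, so all conjugations may be suppressed.

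First I would fix $\beta<\alpha$. For each $\eta<\theta$ with $\alpha_\eta>\beta$ the inductive hypothesis supplies the ordinal $\epsilon_\beta^{\alpha_\eta}<\kappa$, the least $\epsilon\in C_\beta$ past which $f_\beta f_{\alpha_\eta}$ is constant on every interval $[\delta,\clubsucc{C_\beta}{\delta})$ of $C_\beta$. Since at most $\theta<\kappa$ of the indices $\eta$ are relevant and $\kappa$ is regular, the ordinal $\epsilon^\ast=\sup\{\epsilon_\beta^{\alpha_\eta}\mid \eta<\theta,\ \alpha_\eta>\beta\}$ is below $\kappa$, and the claim will follow once one shows that $f_\beta f_s$ is constant on $[\delta,\clubsucc{C_\beta}{\delta})$ whenever $\delta\in C_\beta$ with $\delta\ge\epsilon^\ast$.

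To prove this I would fix such a $\delta$, put $I=[\delta,\clubsucc{C_\beta}{\delta})$, and take $\xi,\xi'\in I$; it then suffices to exhibit a single $\eta<\theta$ with $\alpha_\eta>\beta$, $f_{\alpha_\eta}(\xi)=f_s(\xi)$, and $f_{\alpha_\eta}(\xi')=f_s(\xi')$. The set of $\eta$ satisfying the first requirement is cofinite in $\theta$ (the $\alpha_\eta$ are increasing and cofinal in $\alpha$ and $\widetilde{\mathcal{U}}$ is uniform), and the sets of $\eta$ satisfying the second and the third belong to $\widetilde{\mathcal{U}}$ by the very definition of $f_s$ as an ultralimit. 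Intersecting these three members of $\widetilde{\mathcal{U}}$ gives a nonempty set; for any $\eta$ in it we have $\delta\ge\epsilon^\ast\ge\epsilon_\beta^{\alpha_\eta}$, so $f_\beta f_{\alpha_\eta}$ is constant on $I$, i.e. $f_\beta(\xi)f_{\alpha_\eta}(\xi)=f_\beta(\xi')f_{\alpha_\eta}(\xi')$, and substituting $f_{\alpha_\eta}(\xi)=f_s(\xi)$, $f_{\alpha_\eta}(\xi')=f_s(\xi')$ yields $f_\beta(\xi)f_s(\xi)=f_\beta(\xi')f_s(\xi')$. This establishes the claim, keeps the range-in-$\{-1,+1\}$ invariant intact, and verifies condition~\eqref{coherence.unitaries} for all pairs $s\upharpoonright\beta\subset s$.

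I do not expect a genuine obstacle here beyond bookkeeping: the one point to keep straight is that the ultralimit defining $f_s$, although indexed over all of $\alpha$, only ever sees the values $f_{\alpha_\eta}$, so it is precisely the $\theta$-many thresholds $\epsilon_\beta^{\alpha_\eta}$ — not thresholds for arbitrary $\gamma<\alpha$ — that must be combined, and this is where regularity of $\kappa$ together with $\cf(\alpha)<\kappa$ enters. The substantively different case is $\cf(\alpha)=\kappa$, where this ``supremum of $\theta$-many ordinals below $\kappa$'' trick is unavailable; that is presumably why the text handles it separately afterwards.
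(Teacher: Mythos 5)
Your proof is correct and is essentially the paper's own argument: both take the supremum of the $\theta$-many thresholds $\epsilon_\beta^{\alpha_\eta}$ (valid since $\theta=\cf(\alpha)<\kappa$ and $\kappa$ is regular), and then use that the ultrafilter concentrates on the sequence $\seq{\alpha_\eta}{\eta<\theta}$ to find a single index $\alpha_\eta>\beta$ whose $f_{\alpha_\eta}$ agrees with the ultralimit $f_s$ at both points of the given interval, after which the inductive hypothesis for the pair $(\beta,\alpha_\eta)$ finishes. The only difference is that the paper argues by contradiction from two witnesses $\sigma<\tau$ of non-constancy, while you argue directly; this is cosmetic.
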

    \begin{proof}
      Fix $\beta < \alpha$, and let $\epsilon = \sup_{\eta < \theta} \epsilon_\beta^{\alpha_\eta} \in C_\beta$.  Let $\delta\in C_\beta$ be given, and suppose $\delta \ge \epsilon$, but that $f_\beta f_s$ is \emph{not} constant on $[\delta,\clubsucc{C_\beta}{\delta})$; fix witnesses $\sigma < \tau$ in this interval, and say without loss of generality that $f_\beta(\sigma)f_s(\sigma) = +1$ but $f_\beta(\tau)f_s(\tau) = -1$.  By the definition of $f_s$, there are $A_0,A_1\in\mathcal{U}_\alpha$ such that
      \begin{align*}
        \forall \gamma\in A_0 & \quad f_\beta(\sigma)f_\gamma(\sigma) = +1 \\
        \forall \gamma\in A_1 & \quad f_\beta(\tau)f_\gamma(\tau) = -1
      \end{align*}
      Then if $\gamma\in A_0\cap A_1$ is larger than $\beta$ we have $f_\beta(\sigma)f_\gamma(\sigma) = +1$ and $f_\beta(\tau)f_\gamma(\tau) = -1$.  By definition of $\mathcal{U}_\alpha$ we may choose such a $\gamma$ with $\gamma = \alpha_\eta$ for some $\eta < \theta$.  But this contradicts the choice of $\epsilon_\beta^\gamma$, since $\delta \ge \epsilon > \epsilon_\beta^{\alpha_\eta}$.
    \qed\end{proof}

    Now consider the case where $\cf{\alpha} = \kappa$.  Let $\alpha_\eta$, $\eta < \kappa$, be a continuous, increasing sequence of ordinals which is cofinal in $\alpha$.  Put
    \[
      C_s = \Bigl(\underset{\eta < \kappa}{\Delta} C_{\alpha_\eta}\Bigr)\cap E_\alpha
    \]
    Again, it remains only to define $f_s$ and show that condition~\eqref{coherence.unitaries} holds.  For this we define, for $\xi < \eta$,
    \[
      \rho_\xi^\eta = \min(C_{\alpha_\xi}\setminus (\xi\cup \epsilon_{\alpha_\xi}^{\alpha_\eta}))
    \]
    and
    \[
      \epsilon(\eta) = \sup_{\xi < \eta} \rho_\xi^\eta
    \]
    Note that $\epsilon(\eta)$ is in $C_{\alpha_\xi}$ for each $\xi < \eta$.  Define $f_s(\zeta) = f_{\alpha_\eta}(\zeta)$ whenever $\epsilon(\eta) \le \zeta < \epsilon(\eta+1)$ for some $\eta < \kappa$, that is,
    \[
      f_s = \bigcup_{\eta < \kappa} f_{\alpha_\eta}\upharpoonright [\epsilon(\eta),\epsilon(\eta+1))
    \]
    \begin{claim}
      For every $\beta < \alpha$, $f_\beta f_s$ is constant on a tail of intervals from $C_\beta$.
    \end{claim}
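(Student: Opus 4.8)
I want to show that for each fixed $\beta<\alpha$, the function $f_\beta f_s$ is constant on every interval $[\delta,\clubsucc{C_\beta}{\delta})$ for all sufficiently large $\delta\in C_\beta$. The key point is that $f_s$ is glued together from the pieces $f_{\alpha_\eta}\upharpoonright[\epsilon(\eta),\epsilon(\eta+1))$, so on a given interval of $C_\beta$ the function $f_s$ will typically agree with a \emph{single} $f_{\alpha_\eta}$ (for which the inductive hypothesis applies directly), and the only possible trouble comes at the finitely many — actually, the boundedly many — places where we switch from one $f_{\alpha_\eta}$ to the next. So the first step is to locate a threshold past which the $C_\beta$-intervals are ``thin'' relative to the blocks $[\epsilon(\eta),\epsilon(\eta+1))$, in the precise sense that each such interval is contained in a single block or straddles only blocks on which $f_\beta f_{\alpha_\eta}$ is already controlled.

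**The main step.** Fix $\beta<\alpha$. Since $\seq{\alpha_\eta}{\eta<\kappa}$ is cofinal in $\alpha$, choose $\eta_0<\kappa$ with $\alpha_{\eta_0}>\beta$; then by the inductive hypothesis (and monotonicity of the $\subseteq^*$-relation through $C_\beta\supseteq^* C_{\alpha_{\eta_0}}$ together with condition~\eqref{coherence.unitaries}) there is $\epsilon^*<\kappa$ so that $f_\beta f_{\alpha_\eta}$ is constant on each interval of $C_\beta$ above $\epsilon^*$, simultaneously for $\eta=\eta_0$; more carefully, I will use that $f_{\alpha_{\eta_0}} f_{\alpha_\eta}$ is constant on a tail of $C_{\alpha_{\eta_0}}$-intervals for $\eta\ge\eta_0$ to transfer control along the chain. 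Now I claim that for $\delta\in C_\beta$ with $\delta$ large enough, the interval $I=[\delta,\clubsucc{C_\beta}{\delta})$ meets at most one block $[\epsilon(\eta),\epsilon(\eta+1))$ with $\eta\ge\eta_0$: indeed, $\epsilon(\eta)\in C_{\alpha_\xi}$ for every $\xi<\eta$ and in particular $\epsilon(\eta)\in C_{\alpha_{\eta_0}}$ for $\eta>\eta_0$, so the $\epsilon(\eta)$'s are points of $C_{\alpha_{\eta_0}}$, hence (by the diagonal-intersection definition of $C_s$ and $C_s\subseteq^* C_{\alpha_{\eta_0}}\subseteq^* C_\beta$) eventually points of $C_\beta$; since distinct points of $C_\beta$ below $\clubsucc{C_\beta}{\delta}$ cannot lie strictly between $\delta$ and $\clubsucc{C_\beta}{\delta}$, the interval $I$ contains at most one such $\epsilon(\eta)$, and so $f_s$ restricted to $I$ coincides with a single $f_{\alpha_\eta}$ (after absorbing a bounded initial segment corresponding to the blocks with index $<\eta_0$). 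On that interval $f_\beta f_s = f_\beta f_{\alpha_\eta}$, which is constant by the choice of $\epsilon^*$ and the inductive hypothesis applied to the pair $\beta<\alpha_\eta$. Taking the supremum of the finitely many thresholds involved (or rather: absorbing them into a single $\epsilon<\kappa$, which is legitimate since $\kappa$ is regular) completes the argument.

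**The obstacle.** The delicate point is bookkeeping the thresholds: a priori the constants $\epsilon_\beta^{\alpha_\eta}$ are unbounded in $\kappa$ as $\eta\to\kappa$, so one cannot take a single $\epsilon$ working for all $\eta$ at once — this is exactly why the diagonal intersection and the staircase function $\epsilon(\eta)$ were introduced. The resolution is that for a \emph{fixed} $\beta$ we only ever need control against $f_{\alpha_{\eta_0}}$ for one fixed $\eta_0>\beta$ (to get into $C_{\alpha_{\eta_0}}$) plus the single index $\eta$ whose block $I$ actually meets; the staircase guarantees that on the block $[\epsilon(\eta),\epsilon(\eta+1))$ we have $\delta\ge\epsilon(\eta)\ge\rho_{\eta_0}^\eta\ge\epsilon_{\alpha_{\eta_0}}^{\alpha_\eta}$, so $f_{\alpha_{\eta_0}} f_{\alpha_\eta}$ is constant on the $C_{\alpha_{\eta_0}}$-interval containing $I$, and combining with constancy of $f_\beta f_{\alpha_{\eta_0}}$ (from the tail fixed at the start) yields constancy of $f_\beta f_s=f_\beta f_{\alpha_\eta}$ on $I$. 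Once this chain of inequalities is laid out explicitly the verification is routine, and condition~\eqref{coherence.unitaries} follows for $s$ of cofinality $\kappa$, finishing the limit case and hence the construction.
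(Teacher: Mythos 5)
Your argument is correct and follows the paper's own proof in all essentials: the same observation that the block boundaries $\epsilon(\eta)$ lie in $C_{\alpha_{\eta_0}}$ (hence, via $\subseteq^*$, eventually in $C_\beta$), so that a tail of $C_\beta$-intervals each sit inside a single block where $f_s=f_{\alpha_\eta}$, and the same chain $\delta\ge\epsilon(\eta)\ge\rho_{\eta_0}^\eta\ge\epsilon_{\alpha_{\eta_0}}^{\alpha_\eta}$ to get constancy there. The only difference is organizational: the paper first proves the claim for $\beta$ of the form $\alpha_\xi$ and then factors a general $\beta$ through such an $\alpha_\xi$ using $f_\beta f_s=(f_\beta f_{\alpha_\xi})(f_{\alpha_\xi}f_s)$, whereas you merge the two steps into a single pass.
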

    \begin{proof}
      We will first prove that $f_{\alpha_\xi} f_s$ is constant on a tail of intervals from $C_{\alpha_\xi}$, for each $\xi < \kappa$.  Let $\epsilon = \epsilon(\xi+1)$; then if $\delta \in C_{\alpha_\xi}$ and $\delta \ge \epsilon$, we have $\epsilon(\eta) \le \delta < \clubsucc{C_{\alpha_\xi}}{\delta} \le \epsilon(\eta+1)$ for some $\eta > \xi$.  Hence $f_s$ is equal to $f_{\alpha_\eta}$ on the interval $[\delta,\clubsucc{C_{\alpha_\xi}}{\delta})$.  Since $\delta \ge \epsilon(\eta) \ge \epsilon_{\alpha_\xi}^{\alpha_\eta}$, we see that $f_{\alpha_\xi} f_s$ is constant on this interval, as required.

      Now let $\beta < \alpha$ be given, and choose $\xi < \kappa$ such that $\beta < \alpha_\xi$.  By the above, there is an $\epsilon_0$ such that $f_{\alpha_\xi} f_s$ is constant on each interval of $C_{\alpha_\xi}$ beyond $\epsilon_0$.  Let $\epsilon_1 = \epsilon_\beta^{\alpha_\xi}$, and choose an $\epsilon_2$ such that $C_{\alpha_\xi}\cap [\epsilon_2,\kappa) \subseteq C_\beta$.  It follows that with $\epsilon = \max\{\epsilon_0,\epsilon_1,\epsilon_2\}$ we have
      \[
        \forall \delta\in C_\beta \quad \delta \ge \epsilon \implies \mbox{$f_\beta f_s$ is constant on the interval $[\delta,\clubsucc{C_\beta}{\delta})$}
      \]
    \qed\end{proof}
    Thus we have proven condition~\eqref{coherence.unitaries} in this case, and this finishes the proof of the theorem.
  \qed\end{proof}

  \section{Small ideals}
  \label{sec:small}

  In this section we work with the Hilbert space $\mathcal{H} = \ell^2(\omega_1)$.  Hence the ideals of $\mathcal{B}(\mathcal{H})$ are exactly
  \[
    0 \subset \mathcal{K} \subset \mathcal{J} \subset \mathcal{B}
  \]
  Letting $\mathcal{C}(\mathcal{L})$ denote the usual Calkin algebra over $\mathcal{L}$, i.e. $\mathcal{B}(\mathcal{L}) / \mathcal{K}(\mathcal{L})$, it follows that
  \[
    \mathcal{J}/\mathcal{K} = \bigcup_{\alpha < \omega_1} \mathcal{C}(\ell^2(\alpha)) \subset \mathcal{C}(\ell^2(\omega_1))
  \]
  We will shortly prove Theorem~\ref{small.intro}, in a slightly stronger form; namely, assuming CH, there is an automorphism $\Psi$ of the quotient $\mathcal{J}/\mathcal{K}$ whose restriction to each subalgebra $\mathcal{C}(\ell^2(\alpha))$ is an outer automorphism.  It follows also that $\Psi$ cannot be the restriction of an inner automorphism of $\mathcal{B}/\mathcal{K}$.  Before we start, we will need a special case of Lemma~4.1 from~\cite{F.C}.  We include its proof here for completeness.
  \begin{lemma}
    \label{small lemma}
    Let $\Phi$ be an automorphism of $\mathcal{C}(\mathcal{H})$, where $\mathcal{H}$ is any Hilbert space.  Then $\Phi$ is inner if and only if it is inner on some subspace $\mathcal{L}$ of $\mathcal{H}$ of the same dimension.
  \end{lemma}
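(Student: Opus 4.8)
In what follows I read ``$\Phi$ is inner on a subspace $\mathcal{L}$'' to mean: letting $p$ be the image in $\mathcal{C}(\mathcal{H})$ of the orthogonal projection of $\mathcal{H}$ onto $\mathcal{L}$, the restriction of $\Phi$ to the corner $p\,\mathcal{C}(\mathcal{H})\,p$ is spatially implemented, i.e.\ there is a partial isometry $v\in\mathcal{C}(\mathcal{H})$ with $v^{*}v=p$ and $\Phi(y)=vyv^{*}$ for all $y\in p\,\mathcal{C}(\mathcal{H})\,p$. With this reading the forward direction is immediate: if $\Phi=\Ad u$ with $u$ a unitary of $\mathcal{C}(\mathcal{H})$ and $P$ is the projection onto $\mathcal{L}$, then $[uP]$ spatially implements $\Phi$ on $p\,\mathcal{C}(\mathcal{H})\,p$; so take $\mathcal{L}=\mathcal{H}$. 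The content is in the converse, and the plan is to push innerness from a ``full'' corner out to all of $\mathcal{C}(\mathcal{H})$ in two stages. Write $\kappa=\dim\mathcal{H}$, which I may assume infinite.

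\smallskip
\noindent\emph{Stage 1 (reduce to the identity on a full corner)}. First I would choose the decomposition carefully. Since $\dim\mathcal{L}=\kappa$, write $\mathcal{L}=\mathcal{L}_{1}\oplus\mathcal{L}_{2}$ with $\dim\mathcal{L}_{1}=\dim\mathcal{L}_{2}=\kappa$, and set $\mathcal{K}_{0}=\mathcal{L}_{1}$ and $\mathcal{H}_{0}=\mathcal{L}_{2}\oplus\mathcal{L}^{\perp}$, so $\mathcal{H}=\mathcal{H}_{0}\oplus\mathcal{K}_{0}$ with $\mathcal{K}_{0}\subseteq\mathcal{L}$ and $\dim\mathcal{H}_{0}=\dim\mathcal{K}_{0}=\kappa$. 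Let $q\in\mathcal{C}(\mathcal{H})$ be the projection onto $\mathcal{K}_{0}$ and $p=1-q$. Since being inner on $\mathcal{L}$ trivially restricts to any subspace of $\mathcal{L}$, $\Phi$ is inner on $\mathcal{K}_{0}$, so fix a partial isometry $v_{0}$ with $v_{0}^{*}v_{0}=q$ and $\Phi(y)=v_{0}yv_{0}^{*}$ for $y\in q\,\mathcal{C}(\mathcal{H})\,q$; thus $\Phi(q)=v_{0}v_{0}^{*}$. Here the hypothesis enters: because $\mathcal{K}_{0}$ and its orthocomplement $\mathcal{H}_{0}$ both have dimension $\kappa$, the projection $q$ is Murray--von Neumann equivalent in $\mathcal{C}(\mathcal{H})$ to its complement $1-q$; hence $\Phi(q)$ is equivalent to $1-\Phi(q)$, and a short dimension count then shows that a projection of $\mathcal{C}(\mathcal{H})$ equivalent to its complement must be represented by an honest projection whose range and kernel each have dimension $\kappa$. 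Taking $v_{0}$ honest, its final projection therefore has $\kappa$-dimensional orthocomplement, and since $\dim\mathcal{H}_{0}=\kappa$ as well, $v_{0}$ extends to a unitary $\tilde v_{0}$ of $\mathcal{H}$. Replacing $\Phi$ by the automorphism $\Ad(\tilde v_{0}^{*})\circ\Phi$ --- which is inner exactly when $\Phi$ is --- I may now assume $\Phi$ is the identity on $q\,\mathcal{C}(\mathcal{H})\,q$; in particular $\Phi(q)=q$, $\Phi(p)=p$, and $\Phi$ restricts to an automorphism of $p\,\mathcal{C}(\mathcal{H})\,p$.

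\smallskip
\noindent\emph{Stage 2 (bootstrap off the corner)}. Because $\dim\mathcal{H}_{0}=\dim\mathcal{K}_{0}$, there is a partial isometry $v\in\mathcal{C}(\mathcal{H})$ with $v^{*}v=q$ and $vv^{*}=p$; so $v\in p\,\mathcal{C}(\mathcal{H})\,q$, a ``matrix unit'' linking the two corners. Put $z=\Phi(v)v^{*}$. Using $v=vq$, $\Phi(q)=q$ and the relations on $v$, I would check that $z$ is a unitary of $p\,\mathcal{C}(\mathcal{H})\,p$ and that $\Phi(v)=zv$. Next, for $a\in p\,\mathcal{C}(\mathcal{H})\,p$ the element $v^{*}av$ lies in $q\,\mathcal{C}(\mathcal{H})\,q$, so is fixed by $\Phi$; from $v^{*}av=\Phi(v^{*}av)=\Phi(v)^{*}\Phi(a)\Phi(v)=v^{*}z^{*}\Phi(a)zv$, multiplying by $v$ on the left and $v^{*}$ on the right gives $\Phi(a)=zaz^{*}$ for all $a\in p\,\mathcal{C}(\mathcal{H})\,p$. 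Now set $\tilde z=z+q$, a unitary of $\mathcal{C}(\mathcal{H})$. Then $\Phi$ and $\Ad\tilde z$ agree on both diagonal corners $p\,\mathcal{C}(\mathcal{H})\,p$ and $q\,\mathcal{C}(\mathcal{H})\,q$; and for the off-diagonal part, writing an arbitrary $m\in p\,\mathcal{C}(\mathcal{H})\,q$ as $m=(mv^{*})v$ with $mv^{*}\in p\,\mathcal{C}(\mathcal{H})\,p$, one computes $\Phi(m)=\Phi(mv^{*})\Phi(v)=z(mv^{*})z^{*}\cdot zv=zm=\tilde z m\tilde z^{*}$; the corner $q\,\mathcal{C}(\mathcal{H})\,p$ is handled by taking adjoints. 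Since $\mathcal{C}(\mathcal{H})=p\,\mathcal{C}(\mathcal{H})\,p+p\,\mathcal{C}(\mathcal{H})\,q+q\,\mathcal{C}(\mathcal{H})\,p+q\,\mathcal{C}(\mathcal{H})\,q$, this gives $\Phi=\Ad\tilde z$, so $\Phi$ is inner; undoing the Stage 1 conjugation shows the original $\Phi$ was inner.

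\smallskip
The step I expect to be the crux is the choice $z=\Phi(v)v^{*}$ in Stage 2: everything hinges on producing a \emph{single} unitary of $\mathcal{C}(\mathcal{H})$ that simultaneously implements $\Phi$ on the corner $p\,\mathcal{C}(\mathcal{H})\,p$ and is compatible with the matrix unit $v$ joining the two complementary corners, so that the $2\times 2$ matrix computation closes up. The other place full-dimensionality of $\mathcal{L}$ is genuinely needed is the dimension count in Stage 1 allowing $v_{0}$ to be extended to a unitary: for a small $\mathcal{L}$ there is no control over the ``size'' of $\Phi(q)$, $v_{0}$ need not extend, and the reduction to ``$\Phi$ is the identity on a full corner'' breaks down.
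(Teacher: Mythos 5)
Your proof is correct, but it takes a genuinely longer route than the paper's. The paper uses the full-dimensionality hypothesis at the very first step: since $\dim\mathcal{L}=\dim\mathcal{H}$ there is an isometry $U:\mathcal{H}\to\mathcal{L}$ \emph{onto} $\mathcal{L}$, whose image $u$ in $\mathcal{C}(\mathcal{H})$ satisfies $u^*u=1$ and $uu^*=p$. Writing $x=u^*(uxu^*)u$, noting that $uxu^*$ lies in the corner $\mathcal{C}(\mathcal{L})$, and applying $\Phi$ multiplicatively gives $\Phi(x)=\Phi(u)^*\,v(uxu^*)v^*\,\Phi(u)=(\Phi(u)^*vu)\,x\,(\Phi(u)^*vu)^*$, so $\Phi=\Ad(\Phi(u)^*vu)$ and the proof is three lines, with no $2\times 2$ decomposition, no normalization on a corner, and no dimension count. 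Your Stage 2 identity $a=v(v^*av)v^*$ is the same trick in disguise, but because your linking partial isometry has initial space only $\mathcal{K}_0$ rather than all of $\mathcal{H}$, you are forced to (i) first arrange $\Phi=\mathrm{id}$ on the corner $q\,\mathcal{C}(\mathcal{H})\,q$, which drags in the lifting of projections and partial isometries and the cardinality argument showing $\Phi(q)$ and $1-\Phi(q)$ both have rank $\kappa$, and (ii) reassemble the four matrix corners. Those steps do go through (the dimension count works because compact operators have separable range when $\mathcal{H}$ is nonseparable, and the extension of $v_0$ to a unitary is unobstructed since both orthocomplements have dimension $\kappa$), so nothing is wrong; but all of it is avoidable by taking the isometry to have initial space the whole of $\mathcal{H}$, which is exactly what the hypothesis $\dim\mathcal{L}=\dim\mathcal{H}$ provides.
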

  \begin{proof}
    Let $\mathcal{L}$ be a subspace of $\mathcal{H}$ of the same dimension.  Then there is an isometry $U : \mathcal{H}\to \mathcal{L}$; let $u$ be its image in $\mathcal{C}(\mathcal{H})$.  Suppose $\Phi$ is implemented by conjugation by $v$ on $\mathcal{C}(\mathcal{L})$; then for any $x\in \mathcal{C}(\mathcal{H})$,
    \[
      \Phi(x) = \Phi(u^* u x u^* u) = \Phi(u)^* v \Phi(u) x \Phi(u)^* v^* \Phi(u)
    \]
    and hence $\Phi$ is implemented by conjugation by $\Phi(u)^* v \Phi(u)$ on all of $\mathcal{C}(\mathcal{H})$.
  \qed\end{proof}

  \begin{theorem}
    \label{J mod K}
    Assume CH.  Then there are $2^{\aleph_1}$-many outer automorphisms of $\mathcal{J}/\mathcal{K}$.  Moreover, each of these automorphisms is outer in a strong sense, namely each is outer when restricted to any $\mathcal{C}(\ell^2(\alpha))$, $\alpha < \omega_1$.
  \end{theorem}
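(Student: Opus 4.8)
The plan is to carry out, over the directed union $\mathcal{J}/\mathcal{K}=\bigcup_{\alpha<\omega_1}\mathcal{C}(\ell^2(\alpha))$, a version of the Phillips--Weaver construction (\cite{PW}) of many automorphisms of a separable Calkin algebra under CH, with two additional constraints on the automorphisms produced. The first constraint is that each should fix every projection $[P_\alpha]_{\mathcal{K}}$, where $P_\alpha\colon\ell^2(\omega_1)\to\ell^2(\alpha)$ is the coordinate projection; since $\mathcal{C}(\ell^2(\alpha))=[P_\alpha]_{\mathcal{K}}\,(\mathcal{J}/\mathcal{K})\,[P_\alpha]_{\mathcal{K}}$ and a projection is the unique unit of the corner it cuts out, such an automorphism restricts to an automorphism of each $\mathcal{C}(\ell^2(\alpha))$. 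The second constraint is that the restriction to $\mathcal{C}(\ell^2(\omega))$ be outer; once this is arranged, Lemma~\ref{small lemma} (applied with $\mathcal{H}=\ell^2(\alpha)$ and $\mathcal{L}=\ell^2(\omega)$, a subspace of $\ell^2(\alpha)$ of the same dimension $\aleph_0$) guarantees that the restriction to each $\mathcal{C}(\ell^2(\alpha))$ is an outer automorphism, and hence also that the automorphism of $\mathcal{J}/\mathcal{K}$ is not the restriction of an inner automorphism of $\mathcal{B}/\mathcal{K}$. To obtain $2^{\aleph_1}$-many, the construction is run along the tree $2^{<\omega_1}$.

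Using CH, fix an enumeration $\seq{d_\xi}{\xi<\omega_1}$ of $\mathcal{J}/\mathcal{K}$ (which has density character $\aleph_1$), an enumeration $\seq{u_\xi}{\xi<\omega_1}$ of the unitaries of $\mathcal{C}(\ell^2(\omega))$, and an enumeration $\seq{\alpha_\xi}{\xi<\omega_1}$ of $\omega_1$. By recursion on $\xi<\omega_1$ I would build, for $s\in 2^\xi$, separable C*-subalgebras $A_s\subseteq\mathcal{C}(\ell^2(\gamma_s))$ (for some $\gamma_s<\omega_1$ with $[P_{\gamma_s}]_{\mathcal{K}}\in A_s$), separable subalgebras $B_s\subseteq\mathcal{J}/\mathcal{K}$, and $*$-isomorphisms $\phi_s\colon A_s\to B_s$, so that: $\phi_t$ extends $\phi_s$ whenever $s\subseteq t$; at limit levels $A_s,B_s,\phi_s$ are the completions of the respective unions along the branch below $s$ (legitimate, since a countable union of separable C*-algebras is separable and an isometric $*$-isomorphism of a dense subalgebra extends to the completion, so no extension work is needed at limits); at level $\xi$, for each node $s$, both $d_\xi\in A_{s^\smallfrown i}$ and $d_\xi\in B_{s^\smallfrown i}$ for $i=0,1$ (the back-and-forth clause, which will make the limit maps onto); at level $\xi$, $[P_{\alpha_\xi}]_{\mathcal{K}}\in A_{s^\smallfrown i}$ with $\phi_{s^\smallfrown i}([P_{\alpha_\xi}]_{\mathcal{K}})=[P_{\alpha_\xi}]_{\mathcal{K}}$; at level $\xi$ there is $x\in A_{s^\smallfrown i}\cap\mathcal{C}(\ell^2(\omega))$ with $\phi_{s^\smallfrown i}(x)\neq u_\xi x u_\xi^*$ (unless such $x$ already lies in $A_s$); and finally $\phi_{s^\smallfrown 0}$ and $\phi_{s^\smallfrown 1}$ disagree on some element of $A_{s^\smallfrown 0}\cap A_{s^\smallfrown 1}$, a disagreement that persists along all branches through $s$ since it involves a fixed element of the no-longer-changing domain.

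The engine is a one-step extension lemma of the type in \cite{PW} (and close to the methods of \cite{F.C}): if $A\subseteq\mathcal{C}(\ell^2(\gamma))$ is separable and $\phi\colon A\to B$ belongs to a suitable ``extendable'' family of $*$-isomorphisms (roughly, those approximately inner on finite subsets relative to the ambient corner, a family closed under the extensions below), then for any prescribed $a,a'\in\mathcal{J}/\mathcal{K}$ there is an extendable extension $\phi'$ of $\phi$ with $a$ in its domain and $a'$ in its range; moreover one can simultaneously insist that $\phi'$ fix any prescribed projection $[P_\alpha]_{\mathcal{K}}$ (a mild requirement, compatible with the approximately-inner flavour of extendability), produce two incompatible extendable extensions that disagree on a new element, and, when $\phi$ has not yet been forced to differ from $\Ad u$ on $\mathcal{C}(\ell^2(\omega))$ for the current unitary $u$, also arrange $\phi'(x)\neq uxu^*$ for a new $x\in\mathcal{C}(\ell^2(\omega))$. (Enlarging $\gamma$ mid-construction is harmless, since $\mathcal{C}(\ell^2(\gamma))$ is a corner of $\mathcal{C}(\ell^2(\gamma'))$, indeed $\mathcal{C}(\ell^2(\gamma'))\cong M_2(\mathcal{C}(\ell^2(\gamma)))$ when $\gamma'\setminus\gamma$ is infinite.) Granting this lemma, a routine bookkeeping argument drives the recursion. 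For each branch $b\in 2^{\omega_1}$, set $\Psi_b=\overline{\bigcup_{\xi<\omega_1}\phi_{b\upharpoonright\xi}}$; the back-and-forth clauses make its domain and range all of $\mathcal{J}/\mathcal{K}$, so $\Psi_b$ is an automorphism of $\mathcal{J}/\mathcal{K}$; it fixes every $[P_\alpha]_{\mathcal{K}}$, hence restricts to an automorphism of each $\mathcal{C}(\ell^2(\alpha))$; the diagonalization against the $u_\xi$ makes its restriction to $\mathcal{C}(\ell^2(\omega))$ outer, so by Lemma~\ref{small lemma} its restriction to each $\mathcal{C}(\ell^2(\alpha))$ is outer; and distinct branches yield distinct automorphisms by the persistent disagreements at their splitting nodes. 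Since $|2^{\omega_1}|=2^{\aleph_1}$, this produces $2^{\aleph_1}$-many such automorphisms.

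The hard part is the one-step extension lemma: importing the ``extendable'' invariant of \cite{PW} into this directed-union setting, and checking that it survives when we simultaneously force the automorphism to fix the entire filtration $\seq{[P_\alpha]_{\mathcal{K}}}{\alpha<\omega_1}$ while still leaving, at every node, enough freedom for both the outerness diagonalization and the branching. The limit steps, the back-and-forth bookkeeping, and the propagation of outerness from $\mathcal{C}(\ell^2(\omega))$ to all $\mathcal{C}(\ell^2(\alpha))$ via Lemma~\ref{small lemma} are then routine.
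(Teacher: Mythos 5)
Your overall strategy is coherent, but it has a genuine gap exactly where you flag it: the one-step extension lemma is the entire technical content of a Phillips--Weaver-style construction, and you neither state it precisely nor prove that the ``extendable'' invariant of \cite{PW} survives your additional constraints. This is not routine. In \cite{PW} the invariant is tailored to the Calkin algebra of a separable Hilbert space, and preserving it through the limit stages of a length-$\omega_1$ recursion is already the delicate point there; you are asking it to survive (i) transplantation into the nonseparable algebra $\mathcal{J}/\mathcal{K}$, where the domains $A_s$ live in ever-growing corners $\mathcal{C}(\ell^2(\gamma_s))$, and (ii) the requirement that the limit automorphism fix the entire family $\seq{[P_\alpha]_{\mathcal{K}}}{\alpha<\omega_1}$ --- cofinally many of which get committed along each branch --- while still leaving enough freedom for the outerness diagonalization and for the branching. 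Asserting that these demands are ``compatible with the approximately-inner flavour of extendability'' is precisely the claim that needs proof; as written, the argument is a plan rather than a proof. (The surrounding bookkeeping --- fixing the $[P_\alpha]_{\mathcal{K}}$ to get restrictions to each corner, diagonalizing against the $\aleph_1$-many unitaries of $\mathcal{C}(\ell^2(\omega))$ under CH, and invoking Lemma~\ref{small lemma} to propagate outerness --- is fine.)

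The paper's proof shows that no new back-and-forth construction is needed at all. It takes a single outer automorphism $\Phi$ of $\mathcal{C}(\ell^2(\omega))$ from \cite{PW} as a black box and transports it: fix injections $f_\alpha:\alpha\to\omega$ with $f_\beta\upharpoonright\alpha=^*f_\alpha$ for $\alpha<\beta$ (such a coherent sequence exists outright in ZFC), let $U_\alpha:\ell^2(\alpha)\to\ell^2(\ran(f_\alpha))$ be the induced unitaries, and define $\Psi$ on $\mathcal{C}(\ell^2(\alpha))$ by conjugating $\Phi$ through $U_\alpha$. Coherence of the $f_\alpha$ makes these definitions agree modulo $\mathcal{K}$, so they glue to a single automorphism of $\mathcal{J}/\mathcal{K}=\bigcup_{\alpha<\omega_1}\mathcal{C}(\ell^2(\alpha))$, and Lemma~\ref{small lemma} transfers outerness from $\mathcal{C}(\ell^2(\ran(f_\alpha)))$ to $\mathcal{C}(\ell^2(\alpha))$ exactly as you intend; the count $2^{\aleph_1}$ comes for free from the count in \cite{PW}. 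To complete your write-up you would either have to carry out the extension lemma in full, or replace the transfinite construction by this transfer argument.
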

  \begin{proof}
    Let $\Phi$ be an automorphism of $\mathcal{C}(\ell^2(\omega))$.  Let $f_\alpha : \alpha\to\omega$, $\alpha < \omega_1$, be a sequence of injections satisfying
    \begin{equation}
      \label{inj.coherence}  \forall \alpha < \beta < \omega_1\quad f_\beta\upharpoonright\alpha =^* f_\alpha
    \end{equation}
    for every $\alpha < \beta < \omega_1$.  Set $A_\alpha = \ran(f_\alpha)$, let $U_\alpha : \ell^2(\alpha)\to\ell^2(A_\alpha)$ be the unitary operator induced by $f_\alpha$, and let $u_\alpha$ be its image in $\mathcal{C}(\ell^2(\omega_1))$.  Let $\Psi$ be the unique automorphism of $\mathcal{J}/\mathcal{K}$ such that
    \[
      \forall\alpha < \omega_1\quad (\Ad{u_\alpha})\circ \Psi = \Phi\circ (\Ad{u_\alpha^*})
    \]
    Condition~\eqref{inj.coherence} ensures that such a $\Psi$ exists, and verifying that $\Psi$ is an automorphism is trivial.  Lemma~\ref{small lemma} implies that if $\Phi$ is outer, then $\Phi$ is also outer on every $\ell^2(A_\alpha)$, and hence $\Psi$ is outer on every $\ell^2(\alpha)$.  By the main theorem of~\cite{PW}, there are $2^{\aleph_1}$-many outer automorphisms of $\mathcal{C}(\ell^2(\omega))$, and hence $2^{\aleph_1}$-many outer automorphisms of $\mathcal{J}/\mathcal{K}$.
    
  \qed\end{proof}

\bibliography{corona}{}

\providecommand{\bysame}{\leavevmode\hbox to3em{\hrulefill}\thinspace}
\providecommand{\MR}{\relax\ifhmode\unskip\space\fi MR }
\providecommand{\MRhref}[2]{%
  \href{http://www.ams.org/mathscinet-getitem?mr=#1}{#2}
}
\providecommand{\href}[2]{#2}
\begin{thebibliography}{1}

\bibitem{F.AC}
Ilijas Farah, \emph{All automorphisms of all {C}alkin algebras}, Math. Res.
  Lett. \textbf{18} (2011), no.~3, 489--503. \MR{2802582 (2012e:03104)}

\bibitem{F.C}
\bysame, \emph{All automorphisms of the {C}alkin algebra are inner}, Ann. of
  Math. (2) \textbf{173} (2011), no.~2, 619--661. \MR{2776359 (2012e:03103)}

\bibitem{M.PFA}
Justin~Tatch Moore, \emph{The proper forcing axiom}, Proceedings of the
  {I}nternational {C}ongress of {M}athematicians. {V}olume {II} (New Delhi),
  Hindustan Book Agency, 2010, pp.~3--29. \MR{2827783 (2012g:03139)}

\bibitem{PW}
N.~Christopher Phillips and Nik Weaver, \emph{The {C}alkin algebra has outer
  automorphisms}, Duke Math. J. \textbf{139} (2007), no.~1, 185--202.
  \MR{2322680 (2009a:46123)}

\bibitem{R}
Walter Rudin, \emph{Homogeneity problems in the theory of \v {C}ech
  compactifications}, Duke Math. J. \textbf{23} (1956), 409--419. \MR{0080902
  (18,324d)}

\bibitem{Sh.PF}
Saharon Shelah, \emph{Proper forcing}, Lecture Notes in Mathematics, vol. 940,
  Springer-Verlag, Berlin, 1982. \MR{675955 (84h:03002)}

\bibitem{ShStep}
Saharon Shelah and Juris Stepr{\=a}ns, \emph{P{FA} implies all automorphisms
  are trivial}, Proc. Amer. Math. Soc. \textbf{104} (1988), no.~4, 1220--1225.
  \MR{935111 (89e:03080)}

\end{thebibliography}
\bibliographystyle{amsplain}

\end{document}